\newtheorem{theorem}{Theorem}[section]
\begin{document}

\noindent

 \title[]{ the Poincar\'e series for the algebra of invariants of $n$-ary form }

\author{Leonid Bedratyuk}\address{Khmelnitskiy national university, Insituts'ka, 11,  Khmelnitskiy, 29016, Ukraine}

\begin{abstract} 
The formula for the Poincare series of the algebra of invariant of $n$-ary form is found.
\end{abstract}
\maketitle

\section{Introduction}
Let $V_{n,d}$ be the  vector $\mathbb{C}$-space of  $n$-ary forms of degree $d$ endowed with the natural action of the algebra  $\mathfrak{sl_{n}}.$  Denote  by  $\mathcal{I}_{n,d}:=\mathbb{C}[V_{n,d}]^{\,\mathfrak{sl_{n}}}$ the  algebra of   $\mathfrak{sl_{n}}$-invariant polynomial functions. The algebra $\mathcal{I}_{n,d}$  is  a graded algebra
$$
\mathcal{I}_{n,d}=(\mathcal{I}_{n,d})_0 \oplus (\mathcal{I}_{n,d})_1 \oplus \cdots \oplus (\mathcal{I}_{n,d})_k \oplus  \cdots,
$$
here  $(\mathcal{I}_{n,d})_k$ is the subspace of   homogeneous  invariants of degree $k.$
Denote  ${\nu_{n,d}(k) :=\dim (\mathcal{I}_{n,d})_k.}$  

In \cite{B_SC} the formula for $\nu_{n,d}(k)$ was found. 
In the preprint we derive  a  formula for the Poincar\'e series
$$
\mathcal{P}_{n,d}(t)=\sum_{i=0}^{\infty}\nu_{n,d}(k) t^k,
$$
 of the algebra invariants $\mathcal{I}_{n,d}.$ 
\section{Poincar\'e series}

Consider    the  Lie algebra    $\mathfrak{sl_{n}}$  and  let $E_{k,i}$ denote the matrix that has a one in the $k$-th row and $i$-th column and that has zeros elsewhere. Let 
$$
\mathfrak{h}=\{e_1 E_{1,1} +e_2 E_{2,2}+\cdots+e_n E_{n,n} \mid e_1+e_2+\cdots+e_n=0, e_i \in \mathbb{C}\},
$$ 
be  the Cartan subalgebra of $\mathfrak{sl_{n}}$.
Define $L_i \in \mathfrak{h}^*$ by  $L_i(E_{j,j})=\delta_{i,j}.$ Let $\alpha_{i,j}=L_i-L_{j},$ ${ 1\leqslant i<j \leqslant n}$ are the positive roots  $\mathfrak{sl_{n}}$  and let  $\phi_i=L_1+L_2+\ldots+L_{i}, i=1,\ldots, n-1$ are the fundamental weights. 
 The matrices  $$H_1:=E_{1,\,1}{-}E_{2,\, 2}, H_2:=E_{2,\,2}{-}E_{3,\,3}, \ldots H_{n-1}:=E_{n-1,\,n-1}{-}E_{n,\,n}$$  generate the Cartan  subalgebra of the  Lie  algebra   $\mathfrak{sl_{n}}.$ 
It is easy to check that  $\phi_i(H_j)=\delta_{i,j}.$
 Denote by  $\lambda=(\lambda_1,\lambda_2,\ldots,\lambda_{n-1})$  the weight 
$$
\lambda_1 \phi_1+\lambda_2 \phi_2+\ldots+\lambda_{n-1} \phi_{n-1}, \lambda_i \in \mathbb{Z}.
$$
In the notation the half of sum of all positive roots  $\rho $ equals   $(1,1,\ldots,1).$

In \cite{B_SC} we proved that 
\begin{equation}
\nu_{n,d}(k)=\sum_{s \in \mathcal{W}} (-1)^{|s|} c_{n,d}\bigl(k,\{\rho-s(\rho)\}\bigr),
\end{equation}
where   $\mathcal{W}$  is the Weyl group of $\mathfrak{sl}_n,$ $\{\mu \}$ is the unique dominant weight on the orbit $\mathcal{W}(\mu), $ $c_{n,d}(k,\mu) :=c_{n,d}(k,(\mu_1,\mu_2,\ldots,\mu_{n-1}))$ is the number    of non-negative integer solutions of the system of equations  for the indeterminates $ \alpha_{\textbf{\em i}}$ 
\begin{gather}
\left \{
\begin{array}{l}
\omega_2(\alpha)-\omega_1(\alpha)=\mu_1, \\
\ldots \\
\omega_{n-1}(\alpha)-\omega_{n-2}(\alpha)=\mu_{n-2},\\
 \omega_1(\alpha)+\omega_2(\alpha)+\cdots+\omega_{n-1}(\alpha) +2\,\omega_{n-1}(\alpha)=k\, d-\mu_{n-1}, \\
|\alpha|=k.
\end{array}
\right.
\end{gather}
$\omega_s(\alpha):=\sum_{\textbf{\em i} \in \textbf{I}_{n,d}} i_s\, \alpha_{\textbf{\em i}},$ $\textbf{I}_{n,d}:=\{ \textbf{\em i}:=(i_1,i_2,\ldots, i_{n-1})  \in \mathbb{Z}_{+}^{n-1},$ $   |\textbf{\em i}|\leq d \}, $ $  |\textbf{\em i}|:=i_1+\cdots+ i_{n-1}.$

  Let us derive now  the formula  for calculation of  $\nu_{n,d}(k).$  Solving the system  $(2)$
for  $\omega_1(\alpha),$ $ \omega_2(\alpha), $ $\ldots, $ $\omega_{n-1}(\alpha) $  we  get  
\begin{equation}
\left \{
\begin{array}{l}
\displaystyle \omega_1(\alpha)= \frac{k\,d}{n}-\left(\sum_{s=1}^{n-2} \mu_s-\frac{1}{n}\Bigl(\,\sum_{s=1}^{n-2} s \mu_s-\mu_{n-1}\Bigr) \right)\\
\displaystyle \omega_2(\alpha)= \frac{k\,d}{n} -\left(\sum_{s=2}^{n-2} \mu_s-\frac{1}{n}\Bigl(\,\sum_{s=1}^{n-2} s \mu_s-\mu_{n-1}\Bigr)\right)\\
\ldots \\
\displaystyle \omega_{n-1}(\alpha)= \frac{k\,d}{n}-\frac{1}{n}\Bigl(\,\mu_{n-1}-\sum_{s=1}^{n-2} s \mu_s \Bigr),\\
|\alpha|=k,
\end{array}
\right.
\end{equation}
It is not hard to prove that  the number  $c_{n,d}(k,(0,0,\ldots,0))$ of   non-negative integer solutions of the  system 
 $$
\left \{
\begin{array}{l}
\displaystyle \omega_1(\alpha)= \frac{k\,d}{n}\\
\displaystyle \omega_2(\alpha)= \frac{k\,d}{n} \\
\ldots \\
\displaystyle \omega_{n-1}(\alpha)= \frac{k\,d}{n}\\
|\alpha|=k
\end{array}
\right.
$$
is equal to the coefficient of  $\displaystyle t^k (q_1 q_2 \ldots q_{n-1})^{\frac{k\,d}{n}}$    of the  expansion   of the  series
$$
R_{n,d}(t,q_1, \ldots q_{n-1})=\frac{1}{\displaystyle \prod_{0 \leqslant |\eta|  \leqslant  d } \left(1-t q_1^{\eta_1} q_2^{\eta_2} \cdots q_{n-1}^{\eta_{n-1}}\right)}, \eta=(\eta_1,\eta_2, \ldots, \eta_{n-1}) \in \mathbb{Z}_{+}^{n-1}.
$$
Denote it in such a way:
$$c_{n,d}(k,(0,0,\ldots,0))=\left[t^k (q_1q_2 \cdots q_{n-1} )^{\frac{k\,d}{n}}\right] R_{n,d}(t,q_1 q_2 \ldots q_{n-1}).$$
Then, for a set of nonnegative integer numbers  $\mu:=(\mu_1,\mu_2,\ldots,\mu_{n-1})$ the number  $$c_{n,d}(k,(\mu_1,\mu_2,\ldots,\mu_{n-1})),$$ of integer nonnegative solutions of the system $(4)$
   equals
\begin{gather*}
c_{n,d}(k,(\mu_1,\ldots,\mu_{n-1}))=\left[t^k q_1^{\frac{k\,d}{n}-\mu_1'}\cdots q_{n-1}^{\frac{k\,d}{n}-\mu_{n-1}'} \right]R_
{n,d}(t,q_1 \ldots q_{n-1})=\\
= \left[t^k (q_1\cdots q_{n-1})^{\frac{k\,d}{n}}\right] q_1^{\mu'_1} \cdots q_{n-1}^{\mu_{n-1}'} R_{n,d}(t,q_1 \ldots q_{n-1}).
\end{gather*}
Here 
\begin{equation}
\displaystyle \mu_i'=\left(\sum_{s=i}^{n-2} \mu_s-\frac{1}{n}\Bigl(\,\sum_{s=1}^{n-2} s \mu_s-\mu_{n-1}\Bigr) \right), i=1,\ldots, n-1.\\
\end{equation}
By using the multi-index notation $\textit{\textbf{q}}^{\mu}:=q_1 ^{\mu_1} \cdots q_{n-1} ^{\mu_{n-1}}$, rewrite the expression for  $c_{n,d}(k,\mu)$ in the  form: 
$$
c_{n,d}(k,\mu)=\left[t^k \textit{\textbf{q}}^{\frac{k\,d}{n}}\right] \left( \textit{\textbf{q}}^{\mu'}  R_{n,d}(t,\textit{\textbf{q}}) \right).
$$

Then, Theorem 2.5 implies the  following formula:
\begin{gather}
\nu_{n,d}(k)=\left[t^k \textit{\textbf{q}}^{\frac{k\,d}{n}}\right]  \frac{\displaystyle \sum_{s \in W} (-1)^{|s|} \textit{\textbf{q}}^{\{\rho-s(\rho)\}'}}{ \displaystyle \prod_{|\eta| \leq  d } \left(1-t  \textit{\textbf{q}}^{\eta}\right)}.
\end{gather}
Let us recall that the Poincar\'e series  $\mathcal{P}_{n,d}(t)$ of the algebra  of invariants $ \mathcal{I}_{n,d}$ is the ordinary  generating function of the sequence $\nu_{n,d}(k),$ $k=1,2,\ldots .$ To simplify the notation put 
$$
\oint_{|q_{n-1}|=1} \ldots  \oint_{|q_{1}|=1} f(t,q_1,q_2,\ldots,q_{n-1})  \frac{dq_1 \ldots dq_{n-1}}{q_1\ldots q_{n-1}}:=  \oint_{| \textit{\textbf{q}}|=1} f(t,\textit{\textbf{q}})  \frac{d \textit{\textbf{q}}}{ \textit{\textbf{q}}}.
$$

\begin{theorem} The Poincare series $\mathcal{P}_{n,d}(t)$ of the algebra $\mathcal{I}_{n,d}$ equals
\begin{gather}
P_{n,d}(t)=  \oint_{| \textit{\textbf{q}}|=1}   \frac{\displaystyle  \sum_{s \in W} (-1)^{|s|} \textit{\textbf{q}}^{n \{\rho-s(\rho)\}'}  }{\displaystyle \prod_{|\eta| \leq  d } \left(1-t \textit{\textbf{q}}^{n \eta-d \rho}\right)} \frac{d \textit{\textbf{q}}}{ \textit{\textbf{q}}}. 
\end{gather}
\end{theorem}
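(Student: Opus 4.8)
The plan is to read off $\mathcal{P}_{n,d}(t)=\sum_k\nu_{n,d}(k)t^k$ directly from the coefficient–extraction formula (5) for $\nu_{n,d}(k)$ obtained just before the theorem, which I abbreviate as $\nu_{n,d}(k)=\left[t^k\,\textit{\textbf{q}}^{\,kd/n}\right]G(t,\textit{\textbf{q}})$ with
$$G(t,\textit{\textbf{q}})=\dfrac{\displaystyle\sum_{s\in W}(-1)^{|s|}\,\textit{\textbf{q}}^{\{\rho-s(\rho)\}'}}{\displaystyle\prod_{|\eta|\le d}\left(1-t\,\textit{\textbf{q}}^{\eta}\right)}.$$
The only real obstacle is that the monomial $\textit{\textbf{q}}^{\,kd/n}$ whose coefficient is extracted moves with $k$, so the naive summation over $k$ does not collapse. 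I would remove this by the substitution $t\mapsto t\,\textit{\textbf{q}}^{-d/n}$, which transfers the moving $\textit{\textbf{q}}$–power onto the $t$–power and turns the sum over $k$ into a geometric series.

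Carried out in steps: (i) First clear the denominators $n$ in the exponents by the monomial change of variables $q_i\mapsto q_i^{\,n}$. Since $\{\rho-s(\rho)\}$ lies in the weight lattice and the numbers $n\mu_i'$ of formula (4) are integers, $G$ turns into an honest formal Laurent series, and (5) becomes $\nu_{n,d}(k)=\left[t^k\,\textit{\textbf{q}}^{\,kd}\right]F(t,\textit{\textbf{q}})$ with
$F(t,\textit{\textbf{q}})=\dfrac{\sum_{s\in W}(-1)^{|s|}\,\textit{\textbf{q}}^{\,n\{\rho-s(\rho)\}'}}{\prod_{|\eta|\le d}\left(1-t\,\textit{\textbf{q}}^{\,n\eta}\right)}$; this substitution does not disturb the coefficient being extracted. (ii) Realise the $\textit{\textbf{q}}$–coefficient as a contour integral over the torus, $\left[\textit{\textbf{q}}^{m}\right]H(\textit{\textbf{q}})=\oint_{|\textit{\textbf{q}}|=1}H(\textit{\textbf{q}})\,\textit{\textbf{q}}^{-m}\,\dfrac{d\textit{\textbf{q}}}{\textit{\textbf{q}}}$ (with the tacit factor $(2\pi i)^{-(n-1)}$), and use the elementary identity $\bigl([t^k]F\bigr)\,\textit{\textbf{q}}^{-kd}=[t^k]F\!\left(t\,\textit{\textbf{q}}^{-d},\textit{\textbf{q}}\right)$ (write $F=\sum_{j\ge0}a_j(\textit{\textbf{q}})t^j$ and read off $t^k$) to get $\nu_{n,d}(k)=[t^k]\oint_{|\textit{\textbf{q}}|=1}F\!\left(t\,\textit{\textbf{q}}^{-d},\textit{\textbf{q}}\right)\dfrac{d\textit{\textbf{q}}}{\textit{\textbf{q}}}$, the interchange of $[t^k]$ and $\oint$ being valid termwise. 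Because $\rho=(1,\dots,1)$ we have $\textit{\textbf{q}}^{-d}=\textit{\textbf{q}}^{-d\rho}$, hence $F\!\left(t\,\textit{\textbf{q}}^{-d},\textit{\textbf{q}}\right)=\dfrac{\sum_{s\in W}(-1)^{|s|}\,\textit{\textbf{q}}^{\,n\{\rho-s(\rho)\}'}}{\prod_{|\eta|\le d}\left(1-t\,\textit{\textbf{q}}^{\,n\eta-d\rho}\right)}$, which is exactly the integrand of (6). (iii) Sum over $k$.

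For $|t|<1$ and $\textit{\textbf{q}}$ on the torus one has $|t\,\textit{\textbf{q}}^{\,n\eta-d\rho}|=|t|<1$, so the denominator has no zero on the torus and $F\!\left(t\,\textit{\textbf{q}}^{-d},\textit{\textbf{q}}\right)=\sum_{k\ge0}\bigl([t^k]F\bigr)\textit{\textbf{q}}^{-kd}t^k$ converges uniformly there; integrating term by term gives $\oint_{|\textit{\textbf{q}}|=1}F\!\left(t\,\textit{\textbf{q}}^{-d},\textit{\textbf{q}}\right)\dfrac{d\textit{\textbf{q}}}{\textit{\textbf{q}}}=\sum_{k\ge0}t^k\nu_{n,d}(k)=\mathcal{P}_{n,d}(t)$, which is precisely formula (6). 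I expect the only care to be needed in this last analytic bookkeeping: justifying the term-by-term integration, checking that after the substitution the integrand is a genuine meromorphic function whose polar set avoids the torus $|\textit{\textbf{q}}|=1$ for small $|t|$ (so that the contour integral legitimately represents the relevant constant term), and confirming that the substitution $q_i\mapsto q_i^{\,n}$ leaves the extracted coefficient unchanged. Everything else is purely formal manipulation of the generating function $G$ coming from formula (5).
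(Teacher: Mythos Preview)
Your proposal is correct and follows essentially the same route as the paper: substitute $q_i\mapsto q_i^{\,n}$ to clear the fractional exponents, then replace $t$ by $t\,\textit{\textbf{q}}^{-d}$ so that the $\textit{\textbf{q}}$--dependence of the extracted monomial disappears, express the constant term in $\textit{\textbf{q}}$ as a torus integral, and sum over $k$. The only difference is that you supply the analytic bookkeeping (integrality of $n\mu_i'$, absence of poles on the torus for $|t|<1$, termwise integration) which the paper's proof leaves implicit.
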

\begin{proof}
We have 
\begin{gather*}
\nu_{n,d}(k)=\left[t^k \textit{\textbf{q}}^{\frac{k\,d}{n}}\right]  \frac{\displaystyle \sum_{s \in W} (-1)^{|s|}  \textit{\textbf{q}}^{\{\rho-s(\rho)\}'}}{ \displaystyle \prod_{|\eta| \leq  d } \left(1-t  \textit{\textbf{q}}^{\eta}\right)}
=\left[(t \textit{\textbf{q}} ^d)^{k}\right]  \frac{\displaystyle \sum_{s \in W} (-1)^{|s|}  \textit{\textbf{q}}^{n \{\rho-s(\rho)\}'}}{ \displaystyle \prod_{|\eta| \leq  d } \left(1-t  \textit{\textbf{q}}^{n \eta}\right)}=\\
=\left[t^{k}\right]  \frac{\displaystyle \sum_{s \in W} (-1)^{|s|} \textit{\textbf{q}}^{n \{\rho-s(\rho)\}'}}{\displaystyle \prod_{|\eta| \leq  d } \left(1-t q_1^{n \eta_1-d} q_2^{n\eta_2-d} \cdots q_{n-1}^{n\eta_{n-1}-d}\right)}=\\
=\left[t^{k}\right]\oint_{|q_{n-1}|=1} \ldots  \oint_{|q_{1}|=1}   \frac{\displaystyle \sum_{s \in W} (-1)^{|s|}  \textit{\textbf{q}}^{n \{\rho-s(\rho)\}'}}{\displaystyle \prod_{|\eta| \leq  d } \left(1-t  \textit{\textbf{q}}^{n \eta - d \rho}\right)} \frac{dq_1 \ldots dq_{n-1}}{q_1\ldots q_{n-1}}.
\end{gather*}
Therefore
\begin{gather*}
P_{n,d}(t)=\sum_{k=0}^{\infty}\nu_{n,d}(k)t^k=\\
=\sum_{k=0}^{\infty}\left(\left[t^{k}\right] \displaystyle  \oint_{| \textit{\textbf{q}}|=1}   \frac{\displaystyle  \sum_{s \in W} (-1)^{|s|} \textit{\textbf{q}}^{\{\rho-s(\rho)\}'}  }{\displaystyle \prod_{|\eta| \leq  d } \left(1-t q_1^{n \eta_1-d} q_2^{n\eta_2-d} \cdots q_{n-1}^{n\eta_{n-1}-d}\right)}  \frac{d \textit{\textbf{q}}}{ \textit{\textbf{q}}} \right)t^k=\\
= \oint_{| \textit{\textbf{q}}|=1}   \frac{\displaystyle  \sum_{s \in W} (-1)^{|s|} \textit{\textbf{q}}^{n \{\rho-s(\rho)\}'}  }{\displaystyle \prod_{|\eta| \leq  d } \left(1-t \textit{\textbf{q}}^{n\eta-d \rho}\right)} \frac{d \textit{\textbf{q}}}{ \textit{\textbf{q}}}.
\end{gather*}
\end{proof}

\section{Examples}

Let us consider  the case  of binary form. We have $\mathfrak{h}=\langle H_1\rangle,$ where $H_1= E_{1,1}-E_{2,2}.$  There exist the  positive root $\alpha=L_1-L_2=2 L_1$ and the  fundamental weight $\phi_1=L_1.$ The half the  positive root $\rho$ is equal to  $L_1.$ The Weyl group is generated by the reflecsion  $s_{\alpha},$ $(-1)^{s_{\alpha}}=-1.$  The orbit of weight  $\rho$ consists of the two weights $\phi_1$ and $-\phi_1$  and we have 
$
\rho-\mathcal{W}(\rho)=\{ 0, 2 \phi_1\}.
$
Therefore
$$
\nu_{2,d}(k)= c_{2,d}(k,0)- c_{2,d}(k,2), 
$$
where $c_{2,d}(k,m)$is the number of nonnegative integer solutions of the equation $$\alpha_1+2\alpha_2+\cdots + d\, \alpha_d=\displaystyle \frac{d\,k-m}{2}=\displaystyle \frac{d\,k}{2}-1,$$  on the assumption that  $ |\alpha|=k.$ It is exactly the Sylvester-Cayley formula. By $(4),$ $(5)$ we have 
$$
\nu_{2,d}(k)=[t z^{\frac{d\,k}{2}}] \frac{1-z}{(1-t)(1-tz)\ldots (1-tz^d)}=[t^k] \frac{1-z^2}{(1-t z^{-d})(1-t z^{-d+2}) \ldots (1-t z^d)}.
$$
 Theorem 2.1 implies  
\begin{gather}
P_{2,d}(t)=\oint_{|z|=1} \frac{1-z^2}{(1-t z^{-d})(1-t z^{-d+2}) \ldots (1-t z^d)} \frac{dz}{z}.
\end{gather}
It is a well known formula. For instance, in \cite{DerK} it derived from the Molien-Weyl  formula.

Let us now  consider  the case  of ternary form.  We have $\phi_1=L_1, \phi_2=L_1+L_2.$ The positive roots are 
$$
\begin{array}{l}
\alpha_1:=L_1-L_2=2\phi_1-\phi_2=(2,-1),\\
\alpha_2:=L_2-L_3=-\phi_1+2\phi_2=(-1,2),\\
\alpha_3:=L_1-L_3=\phi_1+\phi_2=(1,1).
\end{array}
$$
Then  half the sum of the positive roots $\rho$ is equal to  $(1,1).$ The Weyl group of Lie  algebra  $\mathfrak{sl_{3}}$ is  generated by the  three reflections  $s_{\alpha_1},$ $s_{\alpha_2},$ $s_{\alpha_3}.$  The orbit  $\mathcal{W}(\rho)$ consists of  $6$ weights  -- (1,1) and  
$$
\begin{array}{ll}
 s_{\alpha_1}(1,1)=(-1,2), & (-1)^{|s_{\alpha_1}|}=-1,\\
 s_{\alpha_2}(1,1)=(2,-1), &  (-1)^{| s_{\alpha_2}|}=-1,\\
 s_{\alpha_3}(1,1)=(-1,-1), &  (-1)^{| s_{\alpha_3}|}=-1,\\
s_{\alpha_1} s_{\alpha_3}(1,1)=(1,-2), &  (-1)^{|s_{\alpha_1} s_{\alpha_3}|}=1,\\
s_{\alpha_3} s_{\alpha_1}(1,1)=(-2,1), &  (-1)^{|s_{\alpha_3} s_{\alpha_1}|}=1,\\
\end{array}
$$
Therefore
$$
\rho-\mathcal{W}(\rho)=\{ (0,0), (2,-1),(-1,2), (2,2), (0,3), (3,0)\}.
$$
By  $(1)$  we  obtain
$$
\nu_{3,d}(k)= c_{3,d}(k,(0,0))- 2\,c_{3,d}(k,(1,1)) - c_{3,d}(k, (2,2))+c_{3,d}(k,(0,3))+ c_{3,d}(k, (3,0)).
$$
We have 
$$
R_{3,d}(t,p,q)=\frac{1}{\displaystyle \prod_{0 \leqslant  \mu_1+\mu_2 \leqslant d} (1-t p^{\mu_1} q^{\mu_2})}=\frac{1}{\displaystyle \prod_{k=0}^{d} \prod_{i=0}^{k}(1-t p^{i} q^{k-i})}.
$$
By $(4)$ we get  
$$
(\mu_1,\mu_2)'=\left(\frac{2\mu_1+\mu_2}{3}, \frac{\mu_2-\mu_1}{3} \right).
$$
It implies that 
 $(0,0)'=(0,0),$ $  (1,1)'=(1,0),$ $ (2,2)'=(2,0),$ $ (0,3)'=(1,1),$ $ { (3,0)'=(2,-1)}.$ Therefore
$$
\begin{array}{ll}
\displaystyle c_{3,d}(k,(0,0))=[t^k (pq)^{\frac{d k}{3}}]R_{3,d}(t,p,q), &
c_{3,d}(k,(1,1))=[t^k (pq)^{\frac{d k}{3}}]pR_{3,d}(t,p,q),\\
& \\
\displaystyle c_{3,d}(k,(2,2))=[t^k (pq)^{\frac{d k}{3}}]p^2R_{3,d}(t,p,q), &
c_{3,d}(k,(0,3))=[t^k (pq)^{\frac{d k}{3}}] p q R_{3,d}(t,p,q),\\
& \\
\displaystyle c_{3,d}(k,(3,0))=[t^k (pq)^{\frac{d k}{3}}]\frac{p^2}{q}R_{3,d}(t,p,q). &
\end{array}
$$ 
Thus
$$
\nu_{3,d}(k)=\left[{t^k(pq)^{\frac{d\,k}{3}}}\right] \frac{\displaystyle 1+p\,q+\frac{p^2}{q}-2\,p-p^2}{\displaystyle \prod_{k=0}^{d} \prod_{i=0}^{k}(1-t p^{i} q^{k-i})}=[t^k] \frac{\displaystyle 1+q^3\,p^3+\frac{p^6}{q^3}-2\,p^3-p^6}{\displaystyle \prod_{k=0}^{d} \prod_{i=0}^{k}(1-t p^{3i-d} q^{3(k-i)-d})}. 
$$
By $(6)$ we have 
$$
\mathcal{P}_{3,d}(t)=\oint_{|p|=1}\oint_{|q|=1} \frac{\displaystyle 1+q^3\,p^3+\frac{p^6}{q^3}-2\,p^3-p^6}{\displaystyle \prod_{k=0}^{d} \prod_{i=0}^{k}(1-t p^{3i-d} q^{3(k-i)-d})} \frac{dq}{q} \frac{dp}{p}.
$$

\end{document}